\documentclass[12pt]{amsart}
\usepackage{amsmath,amssymb}
\usepackage{hyperref}
\usepackage{longtable}
\usepackage{xy,amscd}
\usepackage{epsfig}
\usepackage{xspace}
\usepackage{tikz}
\usetikzlibrary{calc}
\xyoption{all}
\usepackage{mathabx}

\newtheorem{propo}{Proposition}[section]
\newtheorem{corol}[propo]{Corollary}
\newtheorem{theor}[propo]{Theorem}

\theoremstyle{definition}
\newtheorem{defin}[propo]{Definition}
\newtheorem{examp}[propo]{Example}

\theoremstyle{remark}
\newtheorem{remar}[propo]{Remark}

\numberwithin{equation}{section}

\newcommand{\NN }{\mathbb{N}}
\newcommand{\CC }{\mathbb{C}}

\newcommand{\ZZ }{\mathbb{Z}}

\newcommand{\id }{\mathrm{id}}

\newcommand{\cc }{{\bf c}}

\newcommand{\cAp }{\mathcal{E}}

\newcommand{\Dc }{\mathcal{D}}
\newcommand{\Fc }{\mathcal{F}}

\newcommand{\quidcyc}{quiddity cycle\xspace}
\newcommand{\quidcycs}{quiddity cycles\xspace}
\newcommand{\qtriple}{\text{\bf q}}

\newcommand{\blp}{\boldsymbol{(}}
\newcommand{\brp}{\boldsymbol{)}}

\newcommand{\dynk}[3]{
$\begin{tikzpicture}
\draw (0,0) -- (1.2,0);
\draw (-0.08,0) circle (0.08);
\draw (1.28,0) circle (0.08);
\coordinate[label=0:$#1$] (q1) at (-0.32,0.36);
\coordinate[label=0:$#2$] (q) at (0.27,0.30);
\coordinate[label=0:$#3$] (q2) at (1.0,0.36);
\end{tikzpicture}$
\hspace{-16pt}
}

\title[On subsequences of quiddity cycles and Nichols algebras]
{On subsequences of quiddity\\ cycles and Nichols algebras}

\author{M.~Cuntz}
\address{Michael Cuntz,
Institut f\"ur Algebra, Zahlentheorie und Diskrete Mathematik,
Fakult\"at f\"ur Mathematik und Physik,
Leibniz Universit\"at Hannover,
Welfengarten 1,
D-30167 Hannover, Germany}
\email{cuntz@math.uni-hannover.de}

\begin{document}

\begin{abstract}
We provide a tool to obtain local descriptions of quiddity cycles.
As an application, we classify rank two affine Nichols algebras of diagonal type.
\end{abstract}

\maketitle

\section{Introduction}

Consider any structure counted by Catalan numbers, for example triangulations of a convex polygon by non-intersecting diagonals.
Given such a triangulation (see for example Fig.\ \ref{ex_124221423}), the sequence of numbers of triangles at each vertex has been called its \emph{quiddity cycle} by Conway and Coxeter \cite{jChC73} because this sequence is the first row of any \emph{frieze pattern} and thus contains the ``quiddity'' of the frieze.
The main result of this paper is:
\begin{theor}[Thm.\ \ref{etaseqsubsets}]
For any $\ell\in\NN$ we may compute finite sets of sequences $E$ and $F$, where the elements of $F$ have length at least $\ell$, and such that every quiddity cycle not in $E$ has an element of $F$ as a (consecutive) subsequence.
\end{theor}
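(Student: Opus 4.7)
The plan is to use an averaging argument over cyclic windows of length $\ell$, exploiting the classical identity
\[
\sum_{i=1}^n a_i = 3n - 6
\]
for a Conway--Coxeter quiddity cycle $(a_1,\ldots,a_n)$ of length $n$ (which follows from the fact that a triangulation of a convex $n$-gon has $n-2$ triangles, each contributing $3$ to the vertex sum).

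First I would take $E$ to be the set of all quiddity cycles of length strictly less than $\ell$, which is finite since the number of quiddity cycles of length $n$ equals the Catalan number $C_{n-2}$. Next I would let $F$ be the set of all length-$\ell$ positive integer sequences $(b_1,\ldots,b_\ell)$ with $b_1+\cdots+b_\ell \leq 3\ell$ that occur as a consecutive (cyclic) subsequence of some quiddity cycle. Since the $b_i$ are positive integers with bounded sum, $F$ is finite; all its elements have length $\ell$, hence length $\geq \ell$ as required.

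The core step is to check that every quiddity cycle of length $n\geq\ell$ contains an element of $F$ as a consecutive subsequence. Suppose for contradiction that some quiddity cycle $(a_1,\ldots,a_n)$ with $n\geq\ell$ has no length-$\ell$ cyclic window of sum $\leq 3\ell$; that is, every window $(a_i, a_{i+1}, \ldots, a_{i+\ell-1})$ has sum strictly greater than $3\ell$. Summing these $n$ window sums, each entry $a_j$ is counted exactly $\ell$ times, so $\ell \sum_j a_j > 3n\ell$. But $\ell \sum_j a_j = \ell(3n-6) < 3n\ell$, a contradiction.

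The main obstacle I anticipate lies in matching the paper's actual setting: since the theorem is applied to rank-two affine Nichols algebras of diagonal type, the ``quiddity cycles'' in question may be generalized beyond the classical Conway--Coxeter case, possibly allowing entries that are not all positive or working with a modified $\SL_2$-relation. In that generality, the identity $\sum a_i = 3n-6$ need not hold, positivity can fail, and finiteness of $F$ via a sum bound is no longer automatic. Handling this would require either substituting a different linear functional on the entries with a known cyclic value (together with a bound forcing window entries into a finite set), or making direct use of the ``tool'' announced in the abstract --- presumably a lemma that describes, explicitly and locally, the matrix factorizations through which subsequences of quiddity cycles arise.
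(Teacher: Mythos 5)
Your averaging argument is correct and does prove the statement as phrased here, but by a genuinely different and more elementary route than the paper. The identity $\sum_i c_i = 3n-6$ holds for the paper's quiddity cycles (the recursion in Definition \ref{eta_seq} starts from $\blp 0,0\brp$ and each insertion of an ear adds $1$ to the length and $3$ to the sum), every cyclic window is a consecutive subsequence of some representative in the sense of the paper's containment relation, and the window-sum pigeonhole is sound; so taking $E$ to be the cycles of length $<\ell$ and $F$ the length-$\ell$ windows of sum $\le 3\ell$ works. Two small repairs: drop the clause ``that occur as a consecutive subsequence of some quiddity cycle'' from the definition of $F$ (the statement does not require elements of $F$ to be realized, and with that clause the computability of $F$ is not obvious), and either allow the entry $0$ or put $\blp 0,0\brp$ into $E$ so that the cases $\ell\le 2$ are covered. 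Your closing worry is unfounded: the set $\cAp$ of the paper is exactly the classical Conway--Coxeter set, so positivity of the entries and the sum formula are available. The substantive difference is that the paper's Theorem \ref{etaseqsubsets} is a stronger, constructive statement: it is an induction step $(E,F)\mapsto(E',F')$ built from the bijection $\overline{\psi}:\cAp\to\cAp'$ and the rewriting maps $\delta,\rho,\iota$, which, iterated from $E=\{\blp 0,0\brp,\blp 1,1,1\brp\}$ and $F=\{(1)\}$, produces very small explicit sets (the $27$ sequences of Corollary \ref{etafour} for $\ell=4$, versus the several hundred bounded-sum windows your $F$ would contain). That economy is what makes the applications in Theorem \ref{thm:subseqs} and Corollary \ref{15cases} feasible by hand; your argument buys a cheap existence-and-computability proof of the informal statement, but it does not recover the paper's induction step or its sharp explicit sets.
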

\noindent
In other words, this theorem gives a local description of quiddity cycles. For example if $\ell=4$:
\begin{corol}\label{etafour}
Every \quidcyc \footnote{Quiddity cycles are considered up to the action of the dihedral group, for example $\blp 1,2,3,1,2,3\brp\equiv\blp 2,1,3,2,1,3\brp$.} $c\notin \{\blp 0,0\brp,\blp 1,1,1\brp,\blp 1,2,1,2\brp\}$ contains at least one of
\begin{eqnarray*}
&&(1,2,2,1),(1,2,2,2),(1,2,2,3),(1,2,2,4),(1,2,3,1),(1,2,3,2),\\
&&(1,2,3,3),(1,2,4,1),(1,2,4,3),(1,2,5,1),(1,2,5,2),(1,2,6,1),\\
&&(1,3,1,3),(1,3,1,4),(1,3,1,5),(1,3,1,6),(1,3,4,1),(1,4,1,2),\\
&&(1,5,1,2),(1,6,1,2),(1,7,1,2),(2,1,3,2),(2,1,3,3),(2,2,1,4),\\
&&(2,2,1,5),(3,1,2,3),(3,1,2,4).
\end{eqnarray*}
\end{corol}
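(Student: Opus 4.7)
The plan is to apply Theorem \ref{etaseqsubsets} with $\ell=4$ and to verify that the sets it produces coincide with the explicit $E=\{(0,0),(1,1,1),(1,2,1,2)\}$ and the 27-element set $F$ displayed in the corollary. Thus the proof reduces to (i) justifying that these three cycles must appear as exceptions, and (ii) carrying out and checking the enumeration of $F$ prescribed by the theorem.

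For (i), note that $(0,0)$ and $(1,1,1)$ have length $2$ and $3$ respectively, so they cannot contain any consecutive subsequence of length $4$ at all, and must therefore lie in $E$. For $(1,2,1,2)$, the only consecutive length-$4$ windows obtainable from its dihedral orbit are $(1,2,1,2)$ and $(2,1,2,1)$; neither appears in the displayed $F$, so this cycle is forced into $E$ as well.

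For (ii), I would execute the algorithm underlying Theorem \ref{etaseqsubsets} at $\ell=4$. This is a finite computation: one generates all candidate length-$4$ tuples that can occur as a window inside a quiddity cycle, identifies each with its reverse to respect the dihedral action, discards those that occur only inside cycles of $E$, and retains a minimal family. I would then compare the output tuple-by-tuple against the 27 entries listed, choosing the same representative in each dihedral pair (for instance the lexicographically smaller one, as the displayed list suggests).

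The main obstacle is precisely the bookkeeping in (ii): one must keep careful track of dihedral equivalences of $4$-tuples, correctly handle the boundary effects introduced by the short exceptional cycles, and ensure that no admissible pattern is missed or duplicated. Since the procedure from Theorem \ref{etaseqsubsets} is explicit and terminates in finitely many steps, this is in principle a routine enumeration, best delegated to a symbolic computation; once it is completed and matches the displayed list, the corollary follows immediately.
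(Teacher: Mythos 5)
Your overall instinct---that the corollary is an instance of Theorem \ref{etaseqsubsets} and reduces to a finite computation---matches the paper, which states the corollary as the case $\ell=4$ of the main theorem and leaves the computation implicit (it is carried out once explicitly in the corollary following the theorem, and twice more in Theorem \ref{thm:subseqs}). However, your step (ii) mis-describes that computation in a way that would not work. You propose to ``generate all candidate length-$4$ tuples that can occur as a window inside a quiddity cycle'' and then ``retain a minimal family''. That set of windows is infinite (e.g.\ the fan triangulation of an $(n+2)$-gon has quiddity cycle $\blp n,1,2,2,\ldots,2,1\brp$, so $(1,n,1,2)$ is a window for every $n$), and more importantly, no enumeration of windows can by itself establish the covering property --- the assertion that \emph{every} \quidcyc outside $E$ contains a member of $F$ is universally quantified over the infinitely many quiddity cycles. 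The only mechanism the paper provides for proving such a statement is the specific recursion of Theorem \ref{etaseqsubsets}: start from $E=\{\blp 0,0\brp,\blp 1,1,1\brp\}$, $F=\{(1)\}$, and apply $E\mapsto E\cup\delta^{-1}(\overline{\psi}(E))$, $F\mapsto\rho^{-1}(\iota(\psi(F)))$ three times, so that the minimal length in $F$ reaches $4$. Your proposal invokes the theorem by name but does not actually use this recursion, and the recursion is the entire content of the proof.

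Two further points are missing. First, the output of three iterations is not literally the data displayed in the corollary: the resulting $E'$ contains cycles beyond the three exceptions (already after one step one gets $\blp 1,2,2,1,3\brp$ and $\blp 1,3,1,3,1,3\brp$), and one must check that each such extra cycle itself contains one of the listed $4$-tuples before it can be dropped from the exceptional set; likewise $F'$ contains sequences of length greater than $4$, which must be replaced by suitable length-$4$ consecutive subsequences (this is harmless, since $f\subseteq c$ implies every consecutive subsequence of $f$ is contained in $c$, but it is a step that has to be performed and recorded). Second, your step (i) only proves the easy necessity direction --- that $\blp 0,0\brp$, $\blp 1,1,1\brp$, $\blp 1,2,1,2\brp$ must be exceptions --- which contributes nothing to the actual claim that every other \quidcyc contains one of the $27$ listed sequences. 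As written, the substance of the corollary is entirely deferred to an unspecified and incorrectly specified computation, so the proof is not complete.
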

\noindent
For instance, the cycle in Fig.\ \ref{ex_124221423} contains $(1,2,2,4)$.

\begin{figure}
\centering
\includegraphics[width=0.3\columnwidth]{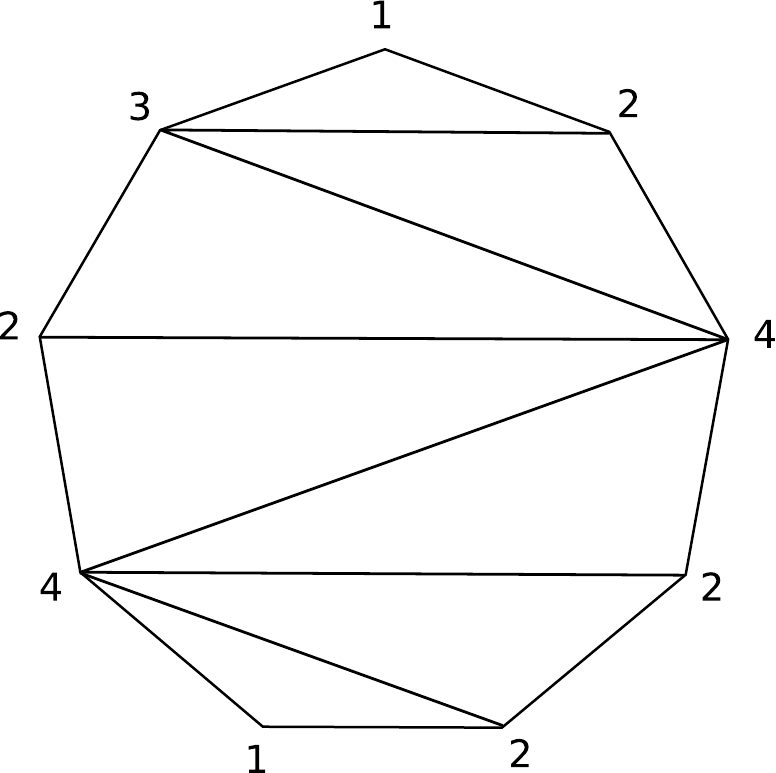}
\caption{Triangulation with quiddity cycle $(1,3,2,4,1,2,2,4,2)$.\label{ex_124221423}}
\end{figure}

As an application, we reproduce the classification of
\cite{jW16}\footnote{Notice that J.\ Wang's result also relies on our main theorem. In this paper we considerably shorten the proof (see Thm.\ \ref{thm:subseqs}) since Wang requires a computer whereas our computation is entirely performed by hand.}:
Nichols algebras of diagonal type are straight forward generalizations of certain algebras investigated by Lusztig in the classical theory. They also produce a rich set of combinatorial invariants called the Weyl groupoids. The Weyl groupoid (see for example \cite{p-H-06},\cite{p-CH09a}) was the essential tool in Heckenberger's classification of finite dimensional Nichols algebras of diagonal type in which case the real roots of the Weyl groupoid form a finite root system \cite{p-Heck06b}.

A natural question is to classify those Nichols algebras for which the Weyl groupoid produces infinitely many roots.
Of course, such a classification is probably a long term project, thus the very first question is to classify only the Nichols algebras of rank two, and only those for which the root system defines a Tits cone which is a half space, i.e., the corresponding Weyl groupoid is then called ``affine''.

The classification of affine Weyl groupoids of rank two was treated in joint work with Bernhard M\"uhlherr and Christian Weigel which started at an Oberwolfach Miniworkshop in October 2012, see \cite{CMW}, \cite{p-CMa-15}, \cite{CMWW}.
It turns out that an affine Weyl groupoid of rank two may be identified with its ``characteristic sequence'', an infinite sequence of positive integers.
Theorem \ref{affwgd:ranktwo} gives an explicit construction of such a characteristic sequence by gluing quiddity cycles together.
These sequences also correspond to certain infinite friezes (see \cite{BPT15}).

It is not too difficult to find out that, if a rank two Nichols algebra produces a Weyl groupoid with root system, then this algebra is uniquely determined by at most three (possibly four in a special case) consecutive entries in the characteristic sequence.
Thus an important result required for the classification of affine rank two Nichols algebras is to reduce the number of potential Weyl groupoids to a finite set of subsequences.
This is where we apply Theorem \ref{etaseqsubsets}:
In Corollary \ref{15cases} we conclude that only $15$ cases need to be considered in the case of affine rank two Nichols algebras of diagonal type.
After a short computation, we obtain a complete description of the corresponding bicharacters. Some of them may be identified as classical algebras, others should be investigated more thoroughly.

This paper is organized as follows. The first section is purely combinatorial; we prove the main theorem and a variation required later. In the second section we briefly explain (in very elementary terms) how to obtain the Weyl groupoid from the bicharacter of a rank two Nichols algebra of diagonal type. In the last section, we apply the main result to obtain a classification of affine bicharacters.

\section{Subsequences of \quidcycs}

\begin{defin}
Let $\Fc_n:=\NN_0^n$ and
\[ \Dc_n := \Fc_n / \sim,  \]
where $(c_1,\ldots,c_n)\sim(d_1,\ldots,d_n)$ if and only if there exists a permutation $\sigma$ in the dihedral group with
$(d_1,\ldots,d_n)=(c_{\sigma(1)},\ldots,c_{\sigma(n)})$, i.e.\ $\sim$ is the equivalence relation obtained by setting
\begin{eqnarray*}
(c_1,\ldots,c_n) &\sim& (c_2,c_3,\ldots,c_{n-1},c_n,c_1),\\
(c_1,\ldots,c_n) &\sim& (c_n,c_{n-1},\ldots,c_2,c_1).
\end{eqnarray*}
We write
\[ \Dc := \bigcup_{n\in\NN} \Dc_n, \quad \Fc := \bigcup_{n\in\NN} \Fc_n. \]
We further write $\blp c_1,\ldots,c_n\brp$ for the equivalence class of $(c_1,\ldots,c_n)$.
\end{defin}
\begin{remar}\label{dih_rem}
Whenever we consider elements of $\Dc$ in this section,
all arguments and definitions are using representatives and are well-defined nevertheless. For instance, speaking of positions $i,i+1$ can be meant to be the last and first position for a representative.
\end{remar}
\begin{defin}
We say that $c=\blp c_1,\ldots, c_n\brp\in\Dc$ \emph{contains} $d=(d_1,\ldots, d_m)\in\Fc$ and write $d\subseteq c$ if there exists a $k\in\NN_0$ and a representative $(c_1,\ldots,c_n)$ of $c$ such that
\[ c_{k+i} = d_{i} \quad \text{for all } i=1,\ldots,m. \]
\end{defin}

\begin{defin}
For $a\in\ZZ$, let
\[ \eta(a) := \begin{pmatrix} a & -1 \\ 1 & 0 \end{pmatrix}. \]
\end{defin}

Notice the rule (compare \cite[Lemma 5.2]{p-CH09b})
\begin{equation}\label{eta_rule}
\eta(a)\eta(b) = \eta(a+1)\eta(1)\eta(b+1)
\end{equation}
for all $a,b$. This motivates the following definition:

\begin{defin}[Compare {\cite[Def.\ 3.2]{p-CH09d}}]\label{eta_seq}
We define the set $\cAp\subseteq\Dc$ of \emph{\quidcycs} recursively as the smallest subset of $\Dc$ satisfying:
\begin{enumerate}
\item $\blp 0,0 \brp\in\cAp$.
\item If $\blp c_1,\ldots,c_n \brp\in\cAp$, then
$\blp c_1+1,1,c_2+1,c_3,\ldots,c_n\brp\in\cAp$. \label{rgca_eta3}
\end{enumerate}
\end{defin}

\begin{remar}\label{etatriangle}
\begin{enumerate}
\item Since $\cAp$ is defined recursively by including ones as in (\ref{eta_rule}), all elements
$\blp c_1,\ldots,c_n\brp\in\cAp$ satisfy the equation $\eta(c_1)\cdots\eta(c_n)=-\id$.
\item A \quidcyc of length $n$ may be visualized by a triangulation of a convex $n$-gon by non-intersecting diagonals:
An entry $c_i$ of the sequence corresponds to the $i$'th vertex; $c_i$ is the number of triangles ending at this vertex.
\end{enumerate}
\end{remar}

\begin{defin}[Compare {\cite{p-C14}}]
Let
\[ \psi : \Fc \rightarrow \Fc, \quad (c_1,c_2,\ldots) \mapsto (c_1+2,1,c_2+2,1,\ldots). \]
Denote
$$\cAp':=\left\{\blp c_1,\ldots,c_n\brp \in\cAp \mid n \text{ is even and } |\{i\mid c_i=1\}|=\frac{n}{2} \right\}.$$
Since two consecutive $1$'s in a $c\in \cAp$ imply $c=\blp 1,1,1\brp$, every $c\in\cAp'$ may be written as
\[ c = \blp *,1,*,1,\ldots,*,1 \brp. \]
For $a\in\ZZ$ let
\[ \xi(a) := \eta(a) \eta(1). \]
Then we have the rule
\[ \xi(a)\xi(3)\xi(b) = \xi(a-1)\xi(b-1) \]
for all $a,b$, and thus obtain that $\psi$ induces a map
\[ \overline{\psi} : \cAp \rightarrow \cAp', \quad \blp c_1,c_2,\ldots\brp \mapsto \blp c_1+2,1,c_2+2,1,\ldots \brp \]
which is a bijection ($\overline{\psi}^{-1}$ corresponds to removing all ears at once in the triangulation).
For $c = (c_1,1,c_3,1,\ldots) \in \Fc_n$, let
\[ \iota (c) := (1,c_1,1,c_3,1,\ldots)\in\Fc_{n+1}. \]
\end{defin}

The key maps to the main result of this section are:

\begin{defin}
For $c=(c_1,\ldots,c_n)\in \Fc$, construct a new sequence $c'$ by repeatedly
replacing
\begin{enumerate}
\item $(\ldots,c_i,c_{i+1},\ldots)$ by $(\ldots,c_i+1,1,c_{i+1}+1,\ldots)$ if $c_i,c_{i+1}>1$,
\item $(c_1,\ldots)$ by $(1,c_1+1,\ldots)$ if $c_1>1$, and
\item $(\ldots,c_n)$ by $(\ldots,c_n+1,1)$ if $c_n>1$,
\end{enumerate}
until none of these rules apply anymore.
We write
\[ \rho : \Fc \rightarrow \Fc, \quad c \mapsto c'. \]
Similarly, define a map $\delta : \Dc\backslash\{\blp 0,0\brp,\blp 1,1,1 \brp\} \rightarrow \cAp'$, $c \mapsto c'$, where $c'$ is obtained from $c$
repeatedly applying the rules
\begin{enumerate}
\item $\blp \ldots,c_i,c_{i+1},\ldots\brp\mapsto\blp\ldots,c_i+1,1,c_{i+1}+1,\ldots\brp$ if $c_i,c_{i+1}>1$,
\item $\blp c_1,\ldots,c_n\brp\mapsto\blp c_1+1,\ldots,c_n+1,1\brp$ if $c_1,c_n>1$.
\end{enumerate}
\end{defin}
\begin{examp}
\begin{eqnarray*}
\rho( (3,1,2,2,1) ) &=& ( 1,4,1,3,1,3,1 ),\\
\rho( (3,1,2,3,1,2) ) &=& ( 1,4,1,3,1,4,1,3,1 ), \\
\delta(\blp 3,1,2,3,1,2\brp ) &=& \blp 4,1,3,1,4,1,3,1\brp, \\
\delta(\blp 4,1,2,2,2,1 \brp ) &=& \blp 4,1,3,1,4,1,3,1 \brp.
\end{eqnarray*}
\end{examp}

\begin{theor}\label{etaseqsubsets}
Assume that $E\subset \cAp$, $F \subset \Fc$ are finite sets such that:
\begin{enumerate}
\item\label{p1} $\{\blp 0,0\brp,\blp 1,1,1 \brp\}\subseteq E$,
\item\label{p2} For all $c\in\cAp$: $\quad c \in E \quad \text{or}\quad \exists f\in F\::\: f\subsetneq c$,
\item\label{p3} Every $f\in F$ contains a $1$.
\end{enumerate}
Then
\begin{eqnarray*}
E' &:=& E \cup \delta^{-1}(\overline{\psi}(E)) \subset \cAp, \\
F' &:=& \rho^{-1}(\iota(\psi(F))) \subset \Fc,
\end{eqnarray*}
are finite sets satisfying the same properties (\ref{p1}),(\ref{p2}),(\ref{p3}) as $E,F$,
and such that
\[ \min \{ \text{length of } c\mid c\in F\} < \min \{ \text{length of } c\mid c\in F'\}. \]
\end{theor}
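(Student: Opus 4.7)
Finiteness of $E'$ and $F'$ is immediate: both $\rho$ and $\delta$ only insert $1$'s and increment entries, so any fibre $\rho^{-1}(g)$ or $\delta^{-1}(g)$ is finite (bounded length, bounded entries), and $\overline{\psi}$ is a bijection. Property~(\ref{p1}) holds because $E\subseteq E'$. For property~(\ref{p3}), suppose for contradiction that some $f'\in F'$ contains no $1$. It contains no $0$ either, for $0$-entries survive untouched under $\rho$ while elements of $\iota(\psi(F))$ have every entry $\geq 1$. So all entries of $f'$ are $\geq 2$, giving $\rho(f')=(1,f'_1+2,1,f'_2+2,1,\ldots,f'_{|f'|}+2,1)$; equating this with $\iota(\psi(f))$ for some $f\in F$ forces $f=f'$, contradicting property~(\ref{p3}) for $F$.

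\textbf{Property~(\ref{p2}): reduction to a lifting claim.} Let $c\in\cAp$. If $c\in E\subseteq E'$ we are done, so assume otherwise. Property~(\ref{p1}) for $E$ then excludes $\blp 0,0\brp$ and $\blp 1,1,1\brp$, so $\delta(c)\in\cAp'$ is defined and $c'':=\overline{\psi}^{-1}(\delta(c))\in\cAp$. If $c''\in E$, then $\delta(c)=\overline{\psi}(c'')\in\overline{\psi}(E)$, whence $c\in\delta^{-1}(\overline{\psi}(E))\subseteq E'$. Otherwise property~(\ref{p2}) for $(E,F)$ supplies some $f\in F$ with $f\subsetneq c''$. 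Since $\overline{\psi}(c'')=\delta(c)$ alternates arbitrary entries with $1$'s, this occurrence of $f$ in $c''$ yields an occurrence of $\psi(f)$ in $\delta(c)$, and cyclically prepending the preceding alternating $1$ gives $\iota(\psi(f))\subsetneq\delta(c)$. It suffices now to produce $g'\subsetneq c$ with $\rho(g')=\iota(\psi(f))$, which would lie in $F'$ and complete the verification of property~(\ref{p2}).

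\textbf{The lifting claim.} The technical core is the claim: if $g$ is a consecutive cyclic subsequence of $\delta(c)$ beginning and ending with a $1$, then some consecutive cyclic subsequence $g'$ of $c$ satisfies $\rho(g')=g$. I would prove this by tracking provenance under the $\delta$-rewrites. Each rewrite is local---it inserts a single $1$ between two neighbours and increments each---so every position of $\delta(c)$ is either the image of a unique position of $c$ (with original value incremented at most twice, once per side) or an inserted $1$. The window of $\delta(c)$ occupied by $g$ corresponds to a consecutive cyclic window of positions of $c$; define $g'$ as the tuple of $c$-entries at the non-inserted positions of this window. Interior rewrites of the $c\to\delta(c)$ derivation whose support lies inside the window correspond exactly to applications of $\rho$'s first rule to $g'$. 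The boundary $1$'s of $g$ split into two cases: an \emph{original} boundary $1$ (from some $c_j=1$) appears as the first (or last) entry of $g'$ and $\rho$'s boundary rule does not fire; an \emph{inserted} boundary $1$ makes the corresponding boundary entry of $g'$ an entry of $c$ that was incremented by the very rewrite which inserted that $1$, and $\rho$'s rule~(2) (resp.~(3)) then reconstructs both the missing $1$ and the $+1$ increment. Putting these cases together gives $\rho(g')=g$; applied with $g=\iota(\psi(f))$, this furnishes the desired $g'$. The main obstacle is this boundary bookkeeping; the decisive point is that the boundary $1$'s of $g$ block rule~(1) from propagating across the window's edge, so no rewrite outside the window can have touched an entry inside.

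\textbf{Length bound.} Finally, fix $f\in F$ and put $m:=|f|$; then $g:=\iota(\psi(f))$ has length $2m+1$ with $m+1$ ones at its odd positions. Any preimage $f'\in\rho^{-1}(g)$ is obtained by deleting a subset of these ones via inverse rewrites; encoding the deletions by indicators $x_0,\ldots,x_m\in\{0,1\}$, we have $|f'|=2m+1-\sum_\ell x_\ell$. Each deletion decrements the (one or two) neighbouring $>1$-entries of $g$ by $1$, and each such decrement is valid only when the entry is $\geq 3$ at that moment. Consequently the total decrement at position $2k$ (initial value $f_k+2$) satisfies $x_{k-1}+x_k\leq\min(2,f_k)$. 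Property~(\ref{p3}) for $F$ provides some $f_j=1$, so $x_{j-1}+x_j\leq 1$, forcing at least one $x_\ell=0$ and hence $\sum_\ell x_\ell\leq m$. Therefore $|f'|\geq m+1>m=|f|$, uniformly in $f\in F$, which yields the desired strict inequality on minimum lengths.
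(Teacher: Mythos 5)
Your proof is correct and follows essentially the same route as the paper's: pass to $\delta(c)\in\cAp'$, use the bijectivity of $\overline{\psi}$ together with property (2) to locate $\psi(f)$ and hence $\iota(\psi(f))$ inside $\delta(c)$, lift back through $\rho$ to an element of $F'$ contained in $c$, and derive the length increase from the presence of a $1$ in $f$. The paper's own proof is just much terser --- it asserts the lifting step (``this implies $c\in\rho^{-1}(\iota(\psi(f)))$'') and the length bound without the provenance bookkeeping you supply --- so your write-up is a faithful, more detailed version of the same argument.
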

\begin{proof}
The finiteness of $E',F'$ is obvious.
Notice first that every $d\in \cAp'\backslash\overline{\psi}(E)$ contains an element of $\psi(F)$ since $\overline{\psi}$ is a bijection.
Now let $c\in \cAp\backslash E$.
Then $\delta(c)\in\cAp'$, hence either $\delta(c)\in\overline{\psi}(E)$ or $\delta(c)$ contains an element of $\psi(F)$.
If $\delta(c)\in\overline{\psi}(E)$, then $c\in E'$. Otherwise, $\delta(c)$ strictly contains an element of $\psi(F)$, say $\psi(f)$; but $\psi(f)$ begins with an entry greater than $1$, thus $\delta(c)$ contains $\iota(\psi(f))$. This implies $c\in\rho^{-1}(\iota(\psi(f)))$.
Now every element of $\rho^{-1}(\iota(\psi(f)))$ has length greater than the length of $f$, because $f$ contains a $1$.
\end{proof}

\begin{corol}
Every \quidcyc contains at least one of
\[ ( 0,0),( 1,1), ( 1,2), \text{ or } ( 1,3). \]
\end{corol}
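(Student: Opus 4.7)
The plan is to apply Theorem \ref{etaseqsubsets} exactly once, starting from the minimal valid data
\[ E_0 = \{\blp 0,0\brp, \blp 1,1,1\brp\}, \quad F_0 = \{(1)\}. \]
I would first check that $(E_0,F_0)$ satisfies hypotheses (\ref{p1})--(\ref{p3}): (\ref{p1}) and (\ref{p3}) are immediate, and for (\ref{p2}) note that any $c\in\cAp\setminus E_0$ has length at least $4$, while the recursion in Definition \ref{eta_seq}\,(\ref{rgca_eta3}) introduces a new $1$ at every step, so $c$ contains a $1$; the containment is strict because $|(1)|=1$.

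The main computational step is to evaluate $E'=E_0\cup\delta^{-1}(\overline\psi(E_0))$ and $F'=\rho^{-1}(\iota(\psi(F_0)))$ by hand. For $F'$ I would compute $\psi((1))=(3,1)$ and $\iota((3,1))=(1,3,1)$, then enumerate all $c\in\Fc$ with $\rho(c)=(1,3,1)$ by reversing the three rewriting rules of $\rho$; since these rules only increase length, any preimage has length at most three, and a short case analysis yields $F'=\{(1,3,1),(2,1),(1,2)\}$. For $E'$ I would compute $\overline\psi(\blp 0,0\brp)=\blp 2,1,2,1\brp$ and $\overline\psi(\blp 1,1,1\brp)=\blp 3,1,3,1,3,1\brp$; the reverse $\delta$-rule (delete a $1$ whose two cyclic neighbours are both $\geq 3$, decrementing each by one) never applies to the first cycle, and applied to the second yields only $\blp 3,1,2,2,1\brp$ up to the dihedral action, which in turn admits no further reversal. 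Hence $E'=\{\blp 0,0\brp,\blp 1,1,1\brp,\blp 2,1,2,1\brp,\blp 3,1,2,2,1\brp,\blp 3,1,3,1,3,1\brp\}$.

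To conclude, I would inspect each of the eight elements of $E'\cup F'$: the first two elements of $E'$ contain $(0,0)$ and $(1,1)$ respectively; the cycles $\blp 2,1,2,1\brp$, $\blp 3,1,2,2,1\brp$ and the sequences $(2,1),(1,2)$ all contain $(1,2)$ (using the dihedral reversal which identifies $(2,1)$ with $(1,2)$); and $\blp 3,1,3,1,3,1\brp$ together with $(1,3,1)$ contain $(1,3)$. The corollary then follows from property (\ref{p2}) applied to $(E',F')$, which is guaranteed by Theorem \ref{etaseqsubsets}.

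The main obstacle is ensuring that the preimage enumerations for $\rho^{-1}$ and $\delta^{-1}$ are exhaustive. This is manageable here because the forward rewriting rules strictly grow the length, and both reverse rules demand a very specific local pattern (two entries $\geq 3$ flanking an isolated $1$), so only finitely many reversals can possibly apply to the small targets $(1,3,1)$ and $\blp 3,1,3,1,3,1\brp$.
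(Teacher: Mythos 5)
Your proposal is correct and follows essentially the same route as the paper: one application of Theorem \ref{etaseqsubsets} to $E=\{\blp 0,0\brp,\blp 1,1,1\brp\}$, $F=\{(1)\}$, followed by the explicit computation of $E'$ and $F'$ (your representatives $\blp 2,1,2,1\brp$, $\blp 3,1,2,2,1\brp$, $\blp 3,1,3,1,3,1\brp$ coincide with the paper's $\blp 1,2,1,2\brp$, $\blp 1,2,2,1,3\brp$, $\blp 1,3,1,3,1,3\brp$ up to the dihedral action, and $F'$ agrees exactly). The paper's proof is terser but identical in substance.
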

\begin{proof}
If $(1,1)\subseteq c\in\cAp$ then $c=\blp 1,1,1\brp$. Further, any \quidcyc which is not $\blp 0,0\brp$ contains a one.
Thus we may apply Thm.\ \ref{etaseqsubsets} to the sets
\[ E = \{\blp 0,0\brp,\blp 1,1,1\brp\}, \quad F = \{(1)\} \]
and obtain
\begin{eqnarray*}
E' &=& \{\blp 0,0\brp,\blp 1,1,1\brp,\blp 1,2,1,2\brp,\blp 1,2,2,1,3\brp,\blp 1,3,1,3,1,3\brp\}, \\
F' &=& \{(1,2),(2,1),(1,3,1)\},
\end{eqnarray*}
which implies the claim.
\end{proof}

We now formulate the result that we will need to determine Nichols algebras later.

\begin{theor}\label{thm:subseqs}
If $c=(c_1,\ldots,c_n)$ is a representative of a \quidcyc, then either $c$ is one of
$$(0,0),(1,1,1),(1,2,1,2),(2,1,2,1),(2,1,3,1,2)$$
or one of $(c_2,\ldots,c_{n-1})$ or $(c_{n-1},\ldots,c_2)$ contains one of
$$
    ( 1, 2, 2 ),
    ( 1, 2, 3 ),
    ( 1, 2, 4 ),
    ( 2, 1, 3 ),
    ( 2, 1, 4 ), $$
$$  ( 2, 1, 5 ),
    ( 3, 1, 4 ),
    ( 3, 1, 5 ),
    ( 1, 3, 1, 3 ).
$$
\end{theor}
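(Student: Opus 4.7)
The plan is to apply Theorem~\ref{etaseqsubsets} iteratively, starting from the minimal data $E_0=\{\blp 0,0\brp,\blp 1,1,1\brp\}$ and $F_0=\{(1)\}$, until the minimum length in $F$ reaches $4$. After a few iterations, Corollary~\ref{etafour} tells us that the resulting $F^*$ consists (up to reversal) of the $27$ length-$4$ sequences listed there, while the accompanying $E^*$ is the set of \quidcycs of length at most $5$. A direct enumeration via Definition~\ref{eta_seq} shows this set is $\{\blp 0,0\brp,\blp 1,1,1\brp,\blp 1,2,1,2\brp,\blp 2,1,3,1,2\brp\}$, matching the exceptional list of the theorem (with $(1,2,1,2)$ and $(2,1,2,1)$ being two representatives of the same class).

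The core of the proof is then to check, for each of the $27$ sequences in $F^*$, that one of the nine target sequences appears as a consecutive subsequence of itself or of its reverse. Twenty are handled by direct inspection: for instance $(1,2,2,\ast)$ contains $(1,2,2)$, $(1,2,3,\ast)$ contains $(1,2,3)$, $(2,1,3,\ast)$ and $(2,2,1,k)$ contain $(2,1,3)$ or $(2,1,k)$, $(1,4,1,2)$ and $(1,5,1,2)$ reverse into sequences containing $(2,1,4)$ and $(2,1,5)$, and $(1,3,1,3),(1,3,1,4),(1,3,1,5)$ contain either $(1,3,1,3)$ or $(3,1,4),(3,1,5)$. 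The remaining seven outliers, all involving a large entry, are $(1,2,5,1)$, $(1,2,5,2)$, $(1,2,6,1)$, $(1,3,1,6)$, $(1,3,4,1)$, $(1,6,1,2)$, $(1,7,1,2)$. For each outlier I would extend the window by one entry on each side within the ambient quiddity cycle and use two elementary facts: first, two consecutive ones force the whole cycle to be $\blp 1,1,1\brp$ (excluded), so the neighbour of any internal $1$ is at least $2$; second, successive ear-removals via $\overline{\psi}^{-1}$ produce shorter \quidcycs in $\cAp$ and yield concrete numerical constraints on the neighbours. In every case these constraints either exhibit one of the nine target sequences inside the extended window, or force the ambient cycle to be one of the length-$\le 5$ exceptional cycles, which does not contain the outlier in the first place---a contradiction ruling out the remaining subcase.

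The interior condition---that the subsequence appear in $(c_2,\ldots,c_{n-1})$, not only as an arbitrary cyclic subsequence---is handled by rotating the representative so the subsequence lies away from the endpoints. This is possible exactly when $n\ge k+2$ for $k$ the subsequence length. With $k\in\{3,4\}$ this condition fails only for \quidcycs of length at most $5$, which are precisely the exceptional cycles. The main obstacle is thus the outlier analysis: each of the seven cases reduces to a short structural argument combining the ear-removal reductions of Definition~\ref{eta_seq} with the parametrisation of short \quidcycs, but the collective bookkeeping is the part requiring the careful hand computation alluded to in the introduction.
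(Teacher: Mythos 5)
Your route diverges from the paper's (which applies Theorem~\ref{etaseqsubsets} only twice, obtaining twelve sequences of lengths $3$--$7$ that each visibly contain one of the nine targets, and then runs an induction on $n$ via ear removal), but it has two genuine gaps. The first and most serious is your treatment of the interior condition. You propose to ``rotate the representative so the subsequence lies away from the endpoints,'' but the theorem quantifies over \emph{all} representatives of a given \quidcyc --- this is why both $(1,2,1,2)$ and $(2,1,2,1)$ appear in the exceptional list, and why $(2,1,3,1,2)$ is exceptional here even though it cyclically contains $(1,2,2,1)$ and hence is \emph{not} exceptional in Corollary~\ref{etafour}. The application in Corollary~\ref{15cases} feeds in representatives $\cc'_k$ dictated by the affine structure, whose first and last entries are destroyed by the $+2$ gluing; so ``some rotation works'' (equivalently, cyclic containment) is strictly weaker than what is needed. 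Establishing that the interior of \emph{every} representative of a long \quidcyc contains a target is exactly the content the paper's ear-removal induction supplies, and your proposal contains no substitute for it.

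The second gap is the outlier analysis, which is asserted rather than carried out, and for which a one-step window extension is demonstrably insufficient. For example, around an occurrence of $(1,3,4,1)$ the extended window $(x,1,3,4,1,y)$ contains none of the nine targets whenever $x\ge 3$ and $y\ge 4$; ruling out such configurations (or locating a target elsewhere in the cycle) requires following the ear-removal chain through several further steps until a forbidden pair of adjacent $1$'s appears, not a single extension. Since Corollary~\ref{etafour} only guarantees \emph{one} occurrence of \emph{one} of the $27$ sequences, you cannot assume a second, friendlier window exists nearby. (A smaller inaccuracy: iterating Theorem~\ref{etaseqsubsets} does not produce $E^*=\{\text{\quidcycs of length}\le 5\}$; the set $E'=E\cup\delta^{-1}(\overline{\psi}(E))$ accumulates longer cycles such as $\blp 1,3,1,3,1,3\brp$ and their preimages, which must then be pruned by a separate interior-containment check.) The paper's choice to stop after two iterations, keeping longer $F$-sequences that each already contain a target, is precisely what lets it avoid both of these difficulties.
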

\begin{proof}
Applying Thm.\ \ref{etaseqsubsets} twice we obtain: Every \quidcyc is $\blp 0,0\brp,\blp 1,1,1\brp,\blp 1,2,1,2\brp$ or contains one of
$$  F = \{( 1, 2, 2 ),
    ( 1, 2, 3, 1 ),
    ( 1, 2, 3, 2, 1 ),
    ( 1, 2, 4, 1, 2 ),
    ( 1, 2, 4, 1, 3, 1 ), $$
$$  ( 1, 3, 1, 3 ),
    ( 1, 3, 1, 4, 1 ),
    ( 1, 3, 1, 5, 1, 2 ),
    ( 1, 3, 1, 5, 1, 3, 1 ),$$
$$  ( 1, 4, 1, 2 ),
    ( 2, 1, 3 ),
    ( 2, 1, 5, 1, 2 )\}. $$
Let $c=(c_1,\ldots,c_n)$ be a representative of a \quidcyc and denote $\tilde F$ the union of $F$ and the set of reversed sequences of $F$. We say that $c$ satisfies $(*)$ if
$$c\in E:=\{(0,0),(1,1,1),(1,2,1,2),(2,1,2,1),(2,1,3,1,2)\}$$
or $(c_2,\ldots,c_{n-1})$ contains one of the sequences in $\tilde F$.
For $n<6$, it is easy to check $(*)$ for $c$. Now assume $n\ge 6$.
Choose an ear $i$, i.e.\ $c_i=1$. Then $c':=(c_1,\ldots,c_{i-1}-1,c_{i+1}-1,\ldots,c_n)$ is a representative\footnote{We do not exclude $i\in\{1,n\}$.} of a \quidcyc of length $n-1$. Thus by induction, $c'$ satisfies $(*)$. If $c'\in E$, then it is easy to see that $c$ will satisfy $(*)$ as well.
Thus assume that $(c'_2,\ldots,c'_{n-2})$ contains one of the sequences in $\tilde F$. But including the ear in the sequences of $F$ results in sequences satisfying $(*)$ as well. The claim in the theorem follows from the fact that every sequence in $F$ contains one of the nine sequences above.
\end{proof}

\section{Characteristic sequences and bicharacters}

\subsection{Characteristic sequences}
Let $\qtriple=(q_1,q,q_2)$ be a triple of numbers (in a commutative ring) and assume that
\[ m_i := \min\{m\in\NN_0 \mid 1+q_i+q_i^2+\ldots+q_i^m=0 \text{ or } q_i^m q=1\} \]
for $i=1,2$ are well defined integers. Let
\begin{eqnarray*}
\sigma_1(q_1,q,q_2) &=& (q_1,q_1^{-2m_1}q^{-1},q_1^{m_1^2}q^{m_1}q_2)\\
&=&\begin{cases}(q_1,q_1^2q^{-1},q_1q^{m_1}q_2) & \text{if } 1+q_1+q_1^2+\ldots+q_1^{m_1}=0\\ (q_1,q,q_2) & \text{if } q_1^{m_1} q = 1\end{cases}
\end{eqnarray*}
and similarly
\begin{eqnarray*}
\sigma_2(q_1,q,q_2) &=& (q_1q^{m_2}q_2^{m_2^2},q_2^{-2m_2}q^{-1},q_2).
\end{eqnarray*}
Thus $\sigma_1,\sigma_2$ produce new triples of numbers which possibly define new integers $m_i$, and notice that $\sigma_i(\sigma_i(q_1,q,q_2))=(q_1,q,q_2)$.

\begin{defin}\label{charseq}
Assuming that the new $m_i$ are well defined again and again, the first triple $\qtriple_0:=\qtriple=(q_1,q,q_2)$ will produce an infinite sequence of the form
\[ \ldots \stackrel{\sigma_2}{\longleftrightarrow} \qtriple_{-2} \stackrel{\sigma_1}{\longleftrightarrow} \qtriple_{-1} \stackrel{\sigma_2}{\longleftrightarrow} \qtriple_0 \stackrel{\sigma_1}{\longleftrightarrow} \qtriple_1 \stackrel{\sigma_2}{\longleftrightarrow} \qtriple_2 \stackrel{\sigma_1}{\longleftrightarrow} \ldots \]
where every $\sigma_i$ has its own $m_i$, thus we obtain a sequence of integers
\[ \ldots,c_{-2},c_{-1},c_0,c_1,c_2,\ldots \]
which we call the \emph{characteristic sequence} of $\qtriple=(q_1,q,q_2)$, where
the $c_i$ correspond to the maps in the following way ($c_0=m_1$, $c_{-1}=m_2$):
\[ \ldots \stackrel{c_{-3}}{\longleftrightarrow} \qtriple_{-2} \stackrel{c_{-2}}{\longleftrightarrow} \qtriple_{-1} \stackrel{c_{-1}}{\longleftrightarrow} \qtriple_0 \stackrel{c_0}{\longleftrightarrow} \qtriple_1 \stackrel{c_1}{\longleftrightarrow} \qtriple_2 \stackrel{c_2}{\longleftrightarrow} \ldots \]
We say that a triple $\qtriple$ is \emph{broken} if the above procedure leads to a triple for which one of the $m_i$ is not defined.
\end{defin}

\begin{examp}
Let $\zeta\in\CC$ be a primitive $9$-th root of unity and $\qtriple=(\zeta^6,\zeta^8,\zeta^6)$. Then the above picture is
\[ \ldots
\stackrel{5}{\longleftrightarrow} (\zeta,\zeta^4,\zeta^6) \stackrel{2}{\longleftrightarrow} (\zeta^6,\zeta^8,\zeta^6)
\mathrel{\mathop{\longleftrightarrow}^2_{\sigma_1}}
(\zeta^6,\zeta^4,\zeta) \stackrel{5}{\longleftrightarrow} (\zeta^6,\zeta^4,\zeta) \stackrel{2}{\longleftrightarrow} \ldots \]
and the characteristic sequence is $(\ldots,2,2,5,2,2,5,\ldots)$, thus periodic with period $(2,2,5)$.
\end{examp}

\begin{remar}
The triple $(q_1,q,q_2)$ may be interpreted as an invariant of a \emph{bicharacter} or equivalently of a \emph{Nichols algebra of diagonal type of rank two}. The corresponding characteristic sequence ``is'' its \emph{Weyl groupoid}. In this paper, we classify all these triples such that the corresponding Weyl groupoid is affine, i.e.\ its Tits cone is a half space (see \cite{CMW} for details).
\end{remar}

\subsection{Chains and cycles}

\begin{defin}
If $\sigma_1(\qtriple_i)=\qtriple_i$, then it is convenient to denote the sequence with an \emph{end}:
\[ \ldots \stackrel{\sigma_2}{\longleftrightarrow} \qtriple_{i-2} \stackrel{\sigma_1}{\longleftrightarrow} \qtriple_{i-1} \stackrel{\sigma_2}{\longleftrightarrow} \qtriple_i \stackrel{\sigma_1}{\righttoleftarrow} \]
\end{defin}

\begin{remar}
An easy computation determines all sequences coming from triples in which at least one entry is not a root of unity, all of them have sequences with two ends which we call \emph{chains}.
\end{remar}

\begin{defin}
When all entries in $\qtriple$ are roots of unity, then the possible number of different triples in the sequence for $\qtriple$ is finite. We thus obtain that the characteristic sequence is either a \emph{chain} or a \emph{cycle}:
\begin{eqnarray*}
\text{chain:} && \lefttorightarrow \qtriple_j \longleftrightarrow \qtriple_{j+1} \longleftrightarrow \ldots \longleftrightarrow \qtriple_{i-1} \longleftrightarrow \qtriple_i \righttoleftarrow \\
\text{cycle:} && \qtriple_i = \qtriple_j \longleftrightarrow \qtriple_{j+1} \longleftrightarrow \ldots \longleftrightarrow \qtriple_{i-1} \longleftrightarrow \qtriple_i
\end{eqnarray*}
\end{defin}

\subsection{Bicharacters from characteristic sequences}

To determine the triple $\qtriple$ from a given characteristic sequence, the knowledge of three consecutive entries $c_i,c_{i+1},c_{i+2}$ is sufficient.
However, this is only possible if $i+1$ is not an end of the sequence, i.e.\ the sequence is not a chain ending at position $i+1$, since in this case $c_i=c_{i+2}$ and we actually only ``know'' two entries.
\begin{propo}
We may completely reconstruct the sequence of triples from a characteristic sequence provided we know three entries $c_i$ as in one of the following situations,
\begin{eqnarray}
& \ldots \qtriple_{i} \stackrel{c_i}{\longleftrightarrow} \qtriple_{i+1} \stackrel{c_{i+1}}{\longleftrightarrow} \qtriple_{i+2} \stackrel{c_{i+2}}{\longleftrightarrow} \qtriple_{i+3} \ldots \\
\label{sit2}& \ldots \qtriple_{i-2} \stackrel{c_{i-2}}{\longleftrightarrow} \qtriple_{i-1} \stackrel{c_{i-1}}{\longleftrightarrow} \qtriple_i \stackrel{c_i}{\righttoleftarrow} \\
& \stackrel{c_0}{\lefttorightarrow} \qtriple_{1} \stackrel{c_{1}}{\longleftrightarrow} \qtriple_2 \stackrel{c_2}{\righttoleftarrow} \end{eqnarray}
where we have included all possible ends.
\end{propo}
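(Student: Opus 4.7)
The full sequence of triples is determined by any single $\qtriple_j$ in it, since moving one step left or right is an application of $\sigma_1$ or $\sigma_2$, each of which needs only the current triple as input. It therefore suffices to reconstruct \emph{one} triple inside the window from the three displayed entries.

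My plan is to treat the generic middle situation first. Writing $\qtriple_{i+1}=(q_1,q,q_2)$, the alternation of $\sigma_1,\sigma_2$ along the sequence lets me identify $c_i$ and $c_{i+1}$ as the two $m$-values $m_1(\qtriple_{i+1})$ and $m_2(\qtriple_{i+1})$ (with the assignment fixed by the parity of $i$), and $c_{i+2}$ as $m_?(\qtriple_{i+2})$ where $\qtriple_{i+2}=\sigma_?(\qtriple_{i+1})$ is given by the explicit formulas in Definition~\ref{charseq}. By definition, each equation ``$m_a=c$'' asserts that $c$ is the \emph{smallest} nonnegative integer for which either the cyclotomic alternative $1+q_a+\cdots+q_a^c=0$ or the monomial alternative $q_a^c q=1$ holds. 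I would split into cases according to which alternative realizes each $c$ and check that in every case the three resulting relations solve uniquely for $(q_1,q,q_2)$: cyclotomic branches pin down the order (and eventually the exact value) of $q_1$ or $q_2$, while monomial branches fix $q$ in terms of the outer entries. The relation carried by $c_{i+2}$ is precisely what is needed when both $c_i$ and $c_{i+1}$ come from the cyclotomic alternative.

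The two end configurations (\ref{sit2}) and~(3) are handled by the same procedure, with one simplification: the arrow $\righttoleftarrow$ (resp.\ $\lefttorightarrow$) at a boundary encodes $\sigma_?(\qtriple)=\qtriple$, which by the formula for $\sigma_?$ is exactly the monomial alternative for the corresponding $m_?$. Thus a boundary arrow already supplies one relation among $(q_1,q,q_2)$ for free, and the remaining two $c$-entries then furnish the other two relations exactly as in the middle case. In configuration~(3) both ends are present, so the two end conditions together with the middle entry $c_1$ give the three required relations.

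The main obstacle will be the organisation of the case analysis together with the minimality built into the definition of each $m_i$: one must verify not merely that the $(q_1,q,q_2)$ produced by the formulas satisfies the three conditions, but also that no \emph{strictly smaller} integer realizes either alternative for any of them---otherwise the reconstructed triple would have a different (smaller) characteristic sequence. Once minimality is accounted for in each sub-case, uniqueness of the reconstructed triple is immediate, and with it the whole sequence of triples.
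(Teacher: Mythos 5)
Your overall strategy is the same as the paper's: translate the three displayed entries (and, at an end, the fixed-point condition $\sigma_j(\qtriple)=\qtriple$, which is indeed equivalent to the monomial alternative $q_j^{m_j}q=1$) into a system of equations in $q_1,q,q_2$ via the two alternatives in the definition of $m_i$, and you are right to flag the minimality built into each $m_i$ as something that must be verified in every branch. However, the central claim of your plan --- that in every case the three resulting relations ``solve uniquely for $(q_1,q,q_2)$'' --- is false, and the argument cannot be completed in that form. The characteristic sequence $\ldots,2,2,2,\ldots$ is produced by the entire one-parameter family $\qtriple=(q,q^{-2},q)$ with $q\neq\pm1$ (line 12 of Fig.~\ref{dynkaff}): there every $\sigma_i$ fixes the triple, every $m_i=2$ is realized by the monomial alternative, and the system obtained from three consecutive entries $2,2,2$ has a positive-dimensional solution set; lines 13 and 14 of the table give further such families. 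What the paper actually asserts (and what the application needs) is the weaker statement that \emph{all} solutions of the system produce the same characteristic sequence --- the ``reconstruction'' is of the sequence of triples as expressions in the unknowns subject to those relations, not of numerical values. So the conclusion of your case analysis should be replaced by: the cyclotomic/monomial branch chosen for each entry, together with minimality, cuts out a solution set on which all subsequent $m_i$'s, and hence the entire characteristic sequence, are constant. With that correction your decomposition into a generic middle case and the two boundary cases matches the computation the paper has in mind.
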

\begin{proof}
Every situation above produces a system of equations in $q_1,q,q_2$ such that all solutions produce the same characteristic sequence.
\end{proof}

\begin{examp}
If we know that the characteristic sequence contains $\ldots,1,3,1,\ldots$, then this information is not sufficient since $3$ could be an end (as in situation (\ref{sit2})).
\end{examp}

\section{Affine Weyl groupoids of rank two}

One obtains all \quidcycs by starting with $(0,0)$ and repeatedly including $1$'s as in Def.\ \ref{eta_seq}.
Similarly, one obtains characteristic sequences of affine Weyl groupoids by starting with the infinite sequence $(\ldots,2,2,\ldots)$ (affine type $A_1^{(1)}$) and repeatedly including $1$'s, however, possibly infinitely many.

Since the sequences we are interested in are periodic (when $(q_1,q,q_2)$ are roots of unity), this amounts to the following construction \cite{CMW}:

\begin{theor}\label{affwgd:ranktwo}
Let $\mathcal{W}$
be an affine Weyl groupoid of rank two with limit $A_1$ with characteristic sequence $\cc$.
Then there exist representatives of \quidcycs $\cc'_k=(c'_{k,1},\ldots,c'_{k,r_k})$, $r_k\in\NN_{>1}$, $k\in\ZZ$ such that
\[
\begin{matrix}
\cc = (\ldots,c'_{k-1,r_{k-2}},&c'_{k-1,r_{k-1}}\\
&+2+ \\
&c'_{k,1}, & c'_{k,2},\ldots,c'_{k,r_k-1}, & c'_{k,r_k} \\
&&& +2+\\
&&&c'_{k+1,1}, & c'_{k+1,1},\ldots).
\end{matrix}
\]
\end{theor}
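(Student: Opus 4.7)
The plan is to induct along the construction described in the paragraph preceding the theorem: every affine characteristic sequence $\cc$ with limit $A_1$ is built from the $A_1^{(1)}$ sequence $(\ldots,2,2,\ldots)$ by a (possibly infinite) sequence of $1$-insertions. I maintain a block decomposition of the required form throughout the construction and check that each single insertion modifies exactly one block by the cycle insertion rule Def.\ \ref{eta_seq}(\ref{rgca_eta3}).

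For the base case, assign $\cc'_k:=(0,0)\in\cAp$ for every $k\in\ZZ$. Then $r_k=2$, there are no interior entries, and each entry of $\cc$ equals $c'_{k-1,r_{k-1}}+2+c'_{k,1}=0+2+0=2$, recovering $A_1^{(1)}$. This is moreover forced at the base level, since $\eta(a)\eta(b)=-\id$ has only the solution $a=b=0$ in $\NN_0^2$, so $(0,0)$ is the unique length-two quiddity cycle.

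For the inductive step, suppose $\cc$ has a decomposition of the claimed form, and a $1$ is inserted between two adjacent entries $a,b$ of $\cc$, producing $\ldots,a+1,1,b+1,\ldots$. I split by the position of $(a,b)$ relative to the decomposition. (i) If $a=c'_{k,i}$ and $b=c'_{k,i+1}$ are both interior to the same block $\cc'_k$ (so $2\le i\le r_k-2$), apply Def.\ \ref{eta_seq}(\ref{rgca_eta3}) to $\cc'_k$ at positions $i,i+1$; this is legitimate at any consecutive pair by the dihedral invariance of $\Dc$ (cf.\ Rem.\ \ref{dih_rem}), and the boundaries $c'_{k-1,r_{k-1}}+2+c'_{k,1}$ and $c'_{k,r_k}+2+c'_{k+1,1}$ are left untouched. (ii) If $a$ is the left boundary entry $c'_{k-1,r_{k-1}}+2+c'_{k,1}$ and $b=c'_{k,2}$ is interior to $\cc'_k$, absorb the $+1$ on $a$ into $c'_{k,1}$ (keeping $c'_{k-1,r_{k-1}}$ fixed) and apply Def.\ \ref{eta_seq}(\ref{rgca_eta3}) at positions $1,2$ of $\cc'_k$; the mirror case at the right boundary is symmetric. (iii) If both $a,b$ are boundary entries, then the block between them has no interior, so $r_k=2$ and $\cc'_k=(0,0)$; replace this by $(1,1,1)\in\cAp$ (the image of $(0,0)$ under Def.\ \ref{eta_seq}(\ref{rgca_eta3})) to simultaneously absorb both $+1$'s and the inserted $1$. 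In each case the updated family $(\cc'_k)_{k\in\ZZ}$ remains a decomposition of the new sequence in the claimed form, with precisely one $\cc'_k$ growing in length by one.

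The step I expect to be the main obstacle is controlling the limit when infinitely many $1$'s are inserted. Since each insertion touches a single block, a fixed $\cc'_k$ stabilizes as soon as no further insertion is attributed to it; the nontrivial point is to guarantee that only finitely many insertions land in each block, so that the limiting $\cc'_k$ is a genuine element of $\cAp$. Here I would invoke the limit-$A_1$ hypothesis: away from regions of non-trivial perturbation the triples $\qtriple_i$ approach $A_1$-type, leaving infinitely many undisturbed $2$'s on both sides that serve as boundary markers $c'_{k-1,r_{k-1}}+2+c'_{k,1}$, partitioning $\cc$ into blocks of finite length each of which receives only finitely many insertions. Collecting the stabilized $\cc'_k$ then yields the required representatives.
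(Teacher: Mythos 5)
First, note that the paper itself gives no proof of this theorem: it is imported from \cite{CMW}, and the paragraph preceding it (``one obtains characteristic sequences of affine Weyl groupoids by starting with the infinite sequence $(\ldots,2,2,\ldots)$ and repeatedly including $1$'s'') is offered as motivation, not established in this paper. Your proposal takes that paragraph as its starting hypothesis, so the hardest part of the statement --- that an affine characteristic sequence with limit $A_1$ is reachable from $(\ldots,2,2,\ldots)$ by $1$-insertions at all --- is assumed rather than proved. Granting that assumption, the bookkeeping you do is correct and worth having: the base case with all blocks $\blp 0,0\brp$, the three insertion cases, and the observation that exactly one block grows per insertion do show that the block decomposition is stable under a single insertion.

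The genuine gap is in the limit step, and the fix you sketch does not work. You claim the limit-$A_1$ hypothesis leaves ``infinitely many undisturbed $2$'s on both sides that serve as boundary markers.'' This is false: a boundary contributes the entry $c'_{k,r_k}+2+c'_{k+1,1}$, which equals $2$ only when both adjacent blocks are $\blp 0,0\brp$. For instance, the affine sequence with period $(1,4)$ (line 6 of Fig.~\ref{dynkaff}, blocks $(1,1,1)$ glued via $1+2+1=4$) contains no entry equal to $2$ at all, so there are no such markers to partition along. What actually needs to be shown is that the set of boundary positions is unbounded in both directions --- equivalently, that no single block absorbs infinitely many insertions, which would yield an ``infinite quiddity cycle'' and destroy the claimed decomposition. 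That is precisely where the actual definition of affineness (the Tits cone being a half-space) must be used, and it is the content delegated to \cite{CMW}; your argument does not engage with it.
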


\begin{defin}
We call a sequence of the structure given in Thm.\ \ref{affwgd:ranktwo} \emph{affine}.
A triple $\qtriple=(q_1,q,q_2)$ is called \emph{affine} if its characteristic sequence is affine.
We denote by $\mu_n$ the set of primitive $n$-th roots of unity.
\end{defin}

\begin{corol}\label{15cases}
Every affine sequence $\cc$ contains one of
\begin{eqnarray*}
(*)&&
    ( 1, 2, 2 ),
    ( 1, 2, 3 ),
    ( 1, 2, 4 ),
    ( 2, 1, 3 ),
    ( 2, 1, 4 ),\\
&&  ( 2, 1, 5 ),
    ( 3, 1, 4 ),
    ( 3, 1, 5 ),
    ( 1, 3, 1, 3 ),
\\
(**)&&
    ( 1, 3, 2 ),
    ( 1, 3, 3 ),
    ( 1, 4, 1, 4 ),
    ( 2, 1, 6 ),
    ( 2, 2, 2, 2 ),
    ( 3, 1, 6 ).
\end{eqnarray*}
\end{corol}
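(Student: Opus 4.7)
The plan is to combine Theorem \ref{affwgd:ranktwo} with Theorem \ref{thm:subseqs}. By Theorem \ref{affwgd:ranktwo}, the affine sequence $\cc$ decomposes as a bi-infinite gluing of \quidcyc representatives $\cc'_k$, $k\in\ZZ$, where consecutive pieces share a boundary entry $c'_{k,r_k}+2+c'_{k+1,1}$. The crucial point is that the interior $(c'_{k,2},\ldots,c'_{k,r_k-1})$ of each piece appears verbatim as a consecutive subsequence of $\cc$. Therefore, if some $\cc'_k$ is not one of the five exceptional representatives $(0,0),(1,1,1),(1,2,1,2),(2,1,2,1),(2,1,3,1,2)$ listed in Theorem \ref{thm:subseqs}, then, up to reversing $\cc$, the interior of $\cc'_k$ contains one of the nine sequences in $(*)$, and this subsequence passes directly to $\cc$. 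This disposes of the generic case.

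It remains to treat the case in which every $\cc'_k$ lies in $E_0:=\{(0,0),(1,1,1),(1,2,1,2),(2,1,2,1),(2,1,3,1,2)\}$. Writing $f_k:=c'_{k,1}$ and $l_k:=c'_{k,r_k}$, one has $f_k,l_k\in\{0,1,2\}$, so the glue entry $g_k:=l_{k-1}+2+f_k$ takes values in $\{2,\ldots,6\}$. A direct computation handles three of the five exceptional types. A piece $\cc'_k=(2,1,3,1,2)$ produces the fragment $g_k,1,3,1,g_{k+1}$ with $g_{k+1}=4+f_{k+1}\in\{4,5,6\}$, so $(3,1,g_{k+1})$ is one of $(3,1,4),(3,1,5),(3,1,6)$, each in $(*)\cup(**)$. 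A piece $(1,2,1,2)$ contributes the triple $(2,1,g_{k+1})$ with $g_{k+1}\in\{4,5,6\}$, and a piece $(2,1,2,1)$ contributes the reversal of such a triple. Hence we may further assume that every $\cc'_k$ equals $(0,0)$ or $(1,1,1)$.

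If every piece is $(0,0)$, then $\cc=(\ldots,2,2,2,\ldots)$ contains $(2,2,2,2)\in(**)$. Otherwise some $\cc'_k=(1,1,1)$ contributes the fragment $g_k,1,g_{k+1}$ with $g_k,g_{k+1}\in\{3,4\}$. The cases $(3,4)$ and $(4,3)$ yield $(3,1,4)\in(*)$ immediately. The case $(4,4)$ forces both neighbors of $\cc'_k$ to be $(1,1,1)$, so the extended fragment contains $(1,4,1,4)\in(**)$. The remaining case $(3,3)$ forces both neighbors to be $(0,0)$, after which the preceding glue entry $g_{k-1}=l_{k-2}+2\in\{2,3\}$ yields $(g_{k-1},3,1)\in\{(2,3,1),(3,3,1)\}$, whose reversals $(1,3,2)$ and $(1,3,3)$ lie in $(**)$.

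The principal obstacle is the bookkeeping in this all-exceptional case; the six sequences of $(**)$ are designed precisely to absorb the residual gluing patterns arising from the pieces in $E_0$ that Theorem \ref{thm:subseqs} alone does not resolve.
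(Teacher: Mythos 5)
Your proposal is correct and follows essentially the same route as the paper: decompose $\cc$ via Theorem \ref{affwgd:ranktwo}, apply Theorem \ref{thm:subseqs} to each piece $\cc'_k$ (whose interior survives verbatim into $\cc$), and then analyse by hand the case where all pieces lie in $E_0$. In fact your write-up is more complete than the paper's, which merely asserts that gluing pieces from $E_0$ ``always produces sequences containing at least one of the subsequences of $(*)$ or $(**)$''; your explicit bookkeeping of the glue entries $g_k=l_{k-1}+2+f_k$ supplies exactly the verification the paper leaves to the reader, and I find no gap in it.
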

\begin{proof}
We assume that $\cc$ is made of representatives of \quidcycs $\cc'_k$. If one of them has length at least $6$, then by Thm.\ \ref{thm:subseqs}, it contains one of the sequences in $(*)$ in the middle, hence $\cc$ contains one of those.
Otherwise, all the $\cc'_k$ have length at most $5$. If one of them has length $5$ and is not $(2,1,3,1,2)$, then it contains one of $(2,1,3)$, $(3,1,2)$, $(1,2,2)$, or $(2,2,1)$ in the middle, which is include in $(*)$. We are now left with a sequence $\cc$ composed of $\cc'_k$ out of $E:=\{(0,0),(1,1,1),(1,2,1,2),(2,1,2,1),(2,1,3,1,2)\}$. But putting triples of elements of $E$ together as in Thm.\ \ref{affwgd:ranktwo} always produces sequences containing at least one of the subsequences of $(*)$ or $(**)$.
\end{proof}

To construct all affine triples $\qtriple$ it is thus sufficient to consider the $15$ cases presented in Cor.\ \ref{15cases}:

\begin{theor}
If $\qtriple=(q_1,q,q_2)$ is an affine triple, then up to permutations of labels, its \emph{Dynkin diagram}
$$\begin{tikzpicture}
\draw (0,0) -- (1.2,0);
\draw (-0.08,0) circle (0.08);
\draw (1.28,0) circle (0.08);
\coordinate[label=0:$q_1$] (q1) at (-0.32,0.36);
\coordinate[label=0:$q$] (q) at (0.4,0.30);
\coordinate[label=0:$q_2$] (q2) at (1.0,0.36);
\end{tikzpicture}$$
is one of those listed in Fig.\ \ref{dynkaff}. The period of its corresponding characteristic sequence is given in the last column.
\end{theor}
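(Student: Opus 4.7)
The plan is to combine Corollary \ref{15cases} with the reconstruction proposition of Section~3.3. By Corollary \ref{15cases}, every affine characteristic sequence contains one of fifteen short subsequences $(*)$ or $(**)$. Each of these subsequences consists of at least three consecutive entries which, since none of them is an end (they are taken from the middle of a longer affine sequence extending infinitely in both directions), by the reconstruction proposition uniquely determine a triple $\qtriple = (q_1, q, q_2)$ up to the small ambiguity allowed by the system of equations. So the strategy is: run through each of the 15 subsequences, solve for $\qtriple$, then read off the Dynkin diagram and period.

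In more detail, I would fix a subsequence $(c_i, c_{i+1}, c_{i+2})$ (or $(c_i, c_{i+1}, c_{i+2}, c_{i+3})$ in the length-four cases), introduce a triple $\qtriple_{i+1}=(q_1,q,q_2)$ sitting at that position, and translate the data of the $c_j$ into equations using the explicit definitions of $\sigma_1$ and $\sigma_2$. Concretely, $c_{i+1}=m_1$ forces either $1+q_1+\cdots+q_1^{c_{i+1}}=0$ with $1+q_1+\cdots+q_1^k \ne 0$ and $q_1^k q \ne 1$ for $k < c_{i+1}$, or $q_1^{c_{i+1}} q = 1$ under the analogous minimality. Applying $\sigma_1$ then gives $\qtriple_{i+2}$; the entry $c_{i+2}$ similarly constrains $\qtriple_{i+2}$, and so on. Since $\sigma_1, \sigma_2$ act by monomial expressions in $q_1, q, q_2$, each subsequence reduces to a small system of equations of the form $\zeta^N = 1$ together with non-vanishing conditions, whose solutions are roots of unity of explicitly bounded orders.

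Once I have the finite list of solutions for each of the 15 subsequences, I would verify that each solution actually produces an affine sequence by iterating $\sigma_1,\sigma_2$ in both directions until periodicity is detected (the sequence of triples is finite since all entries are roots of unity), and then record the period as the final column. Solutions that either break (some $m_i$ undefined) or produce a sequence that is not of the form described by Theorem \ref{affwgd:ranktwo} are discarded. The entries of the resulting $\qtriple$, read through the labelling convention shown in the Dynkin diagram, give the diagrams listed in Fig.\ \ref{dynkaff}, and by the symmetry of the procedure we only need to list them up to permutation of labels (swapping the roles of $q_1$ and $q_2$ corresponds to reversing the characteristic sequence).

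The main obstacle is purely bookkeeping: for each of the 15 subsequences the equations have several branches coming from the two alternatives in the definition of $m_i$ (the ``Cartan'' case $1+q_i+\cdots+q_i^{m_i}=0$ versus the ``truncation'' case $q_i^{m_i}q=1$), and some branches yield one-parameter families of solutions (not all entries are roots of unity) which must be recognized as the \emph{chain} situation and either discarded or identified with a known Dynkin diagram. Keeping the case split organized, and checking the minimality conditions hidden in the definition of each $m_i$ at every step so that the computed subsequence is really the one assumed, is where care is needed; the individual computations, however, are short and elementary.
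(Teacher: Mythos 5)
Your proposal matches the paper's approach: the paper gives no written proof for this theorem beyond the remark that, by Corollary \ref{15cases}, only the fifteen subsequences need to be considered, followed by ``a short computation'' that determines the triples (via the reconstruction proposition) and their periods. Your fleshed-out version --- setting up the monomial equations from $\sigma_1,\sigma_2$ for each subsequence, tracking the Cartan/truncation branches and the one-parameter families, and verifying affineness and periodicity of each solution --- is exactly the computation the paper leaves implicit.
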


\begin{figure}
\begin{tabular}{r|l|l|l}
& generalized Dynkin diagram & parameters & period \\
\hline
\hline
1 & \dynk{\zeta}{\zeta}{\zeta} & $\zeta\in\mu_3$ & $(2)$ \\
\hline
2 & \dynk{\zeta^2}{\zeta^5}{\zeta^2} & $\zeta\in\mu_6$ & $(2)$ \\
\hline
3 & \dynk{\zeta}{\zeta^4}{\zeta^4} & $\zeta\in\mu_6$ & $(2)$ \\
\hline
4 & \dynk{\zeta^4}{\zeta}{\zeta^2} \dynk{\zeta^2}{\zeta^3}{\zeta^2} \dynk{\zeta^2}{\zeta}{\zeta^4} & $\zeta\in\mu_6$ & $(2)$ \\
\hline
5 & \dynk{\zeta}{\zeta^{10}}{\zeta^4} & $\zeta\in\mu_{12}$ & $(2)$ \\
\hline
6 & \dynk{\zeta}{\zeta^4}{\zeta^4} & $\zeta\in\mu_5$ & $(1,4)$ \\
\hline
7 & \dynk{\zeta^4}{\zeta^4}{\zeta} & $\zeta\in\mu_8$ & $(1,4)$ \\
\hline
8 & \dynk{\zeta}{\zeta^9}{\zeta^4} & $\zeta\in\mu_{10}$ & $(1,4)$ \\
\hline
9 & \dynk{\zeta}{\zeta^{10}}{\zeta^9} \dynk{\zeta^9}{\zeta^8}{\zeta^4} & $\zeta\in\mu_{12}$ & $(2,3,1,3)$ \\
\hline
10 &
\begin{minipage}{160pt}
\dynk{\zeta}{\zeta^8}{\zeta^6}
\dynk{\zeta^6}{\zeta^4}{\zeta^3}
\dynk{\zeta^3}{\zeta^2}{\zeta^9}
\dynk{\zeta^9}{\zeta^4}{\zeta^6}
\dynk{\zeta^6}{\zeta^8}{\zeta^7}
\end{minipage}
& $\zeta\in\mu_{12}$ & $(4,1,3,3,1)$ \\
\hline
11 & \dynk{\zeta}{\zeta^{12}}{\zeta^9}
\dynk{\zeta^9}{\zeta^6}{\zeta^4} & $\zeta\in\mu_{18}$ & $(6,1,3,1)$ \\
\hline
12 & \dynk{q}{q^{-2}}{q} & $q \in\CC^\times\backslash\{\pm 1\} $ & $(2)$ \\
\hline
13 & \dynk{q}{q^{-2}}{-q} & $q \in\CC^\times\backslash\{\pm 1\} $ & $(2)$ \\
\hline
14 & \dynk{q}{q^{-4}}{q^4} &
\begin{minipage}{70pt}
$q \in\CC^\times\backslash\{\pm 1\}$,
$q \notin \mu_3$, $q \notin \mu_4$
\end{minipage}
& $(1,4)$ \\
\end{tabular}
\caption{Generalized Dynkin diagrams of affine bicharacters of rank two.\label{dynkaff}}
\end{figure}

\noindent
{\bf Acknowledgment.} Many thanks to H.\ Yamane for pointing out that line 13 in Fig.\ \ref{dynkaff} was missing in a previous version.


\bibliographystyle{amsplain}
\def\cprime{$'$}
\providecommand{\bysame}{\leavevmode\hbox to3em{\hrulefill}\thinspace}
\providecommand{\MR}{\relax\ifhmode\unskip\space\fi MR }
\providecommand{\MRhref}[2]{%
  \href{http://www.ams.org/mathscinet-getitem?mr=#1}{#2}
}
\providecommand{\href}[2]{#2}

\end{document}